\DeclareMathOperator{\conv}{conv}
\DeclareMathOperator{\card}{card}
\DeclareMathOperator{\Avg}{Avg}
\newcommand{\Vol}[1]{\mathrm{Vol}({#1})}
\renewcommand{\d}[1]{\,\mathrm{d}{#1}}
\theoremstyle{plain}
\newtheorem{theorem}{Theorem}[section]
\newtheorem{corollary}[theorem]{Corollary}
\newtheorem{lemma}[theorem]{Lemma}
\theoremstyle{definition}
\theoremstyle{remark}
\numberwithin{equation}{section}
\begin{document}
\title[Hermite-Hadamard inequalities for simplices]{Refinement of the right-hand side of Hermite-Hadamard inequality for simplices}
\author[M. Nowicka]{Monika Nowicka}
\address{Institute of Mathematics and Physics, UTP University of Science and Technology, al. prof. Kaliskiego 7, 85-796 Bydgoszcz, Poland}
\email{monika.nowicka@utp.edu.pl}

\author[A. Witkowski]{Alfred Witkowski}
\email{alfred.witkowski@utp.edu.pl}
\subjclass[2010]{26D15}
\keywords{Hermite-Hadamard inequality, convex function, simplex, barycentric coordinates}
\date{3.09.2015}

\begin{abstract}
We establish a new refinement of the right-hand side of the Hermite-Hadamard inequality for simplices, based on the average values of a convex function over the faces of  a simplex and over the values at their barycenters.
\end{abstract}
\maketitle
\section{Introduction}
The classical Hermite-Hadamard inequality \cite{Had} states that for a convex function $f\colon[a,b]\to\mathbb{R}$
\begin{equation}
f\left(\frac{a+b}{2}\right)\leq \frac{1}{b-a}\int_a^b f(t)\d{t}\leq \frac{f(a)+f(b)}{2}.
\label{eq:HH-dim1}
\end{equation}
There are many generalizations. One of them (\cite{Bes}) says that if $\Delta\subset\mathbb{R}^n$ is a simplex with barycenter $\mathbf{b}$ and vertices $\mathbf{x}_0,\dots,\mathbf{x}_n$ and $f\colon\Delta\to\mathbb{R}$ is convex, then
\begin{equation}\label{eq:HH-Bess}
	f(\mathbf{b})\leq \frac{1}{\Vol{\Delta}}\int_\Delta f(\mathbf{x})\d{\mathbf{x}}\leq \frac{f(\mathbf{x}_0)+\dots f(\mathbf{x}_n)}{n+1}.
\end{equation}
In this paper we aim to improve the right-hand side of the last inequality. 

\section{Definitions and lemmas}
For a fixed  natural number $n\geq 1$ let $N=\{0,1,\dots,n\}$. Suppose $\mathbf{x}_0,\dots, \mathbf{x}_n\in\mathbb{R}^n$ are such that the vectors $\vv{\mathbf{x}_0\mathbf{x}_i},\ i=1,\dots,n$ are linearly independent.\\
Suppose  $K$ is a nonempty subset of $N$ with $k+1$ elements ($0\leq k\leq n$) and denote the elements of $\{\mathbf{x}_i\colon i\in K\}$ by $\{\mathbf{y}_0,\dots,\mathbf{y}_k\}$. The set $\Delta_K=\conv \{\mathbf{y}_0,\dots,\mathbf{y}_k\}$ is called a \textit{simplex} (or a $k$-\textit{simplex} if we want to emphasize its dimension). 
The $n$-simplex $\Delta_N$ will be denoted by $\Delta$.

Given two sets $K\subset L\subset N$ the simplex $\Delta_K$ is called a \textit{face} (or $k$-\textit{face}) of $\Delta_L$.

The points in $\Delta_K$ admit  unique representation of the form
$$\mathbf{y}=\sum_{i=0}^k \alpha_i \mathbf{y}_i,\quad \alpha_i\geq 0,\quad \sum_{i=0}^k \alpha_i=1.$$
The $k+1$-tuple $(\alpha_0,\dots,\alpha_k)$ is called \textit{barycentric coordinates}. The point 
\begin{equation*}
\mathbf{b}_K=\frac{1}{k+1}(\mathbf{y}_0+\dots+\mathbf{y}_k)
\end{equation*}
is called the \textit{barycenter} of $\Delta_K$.

If $\Sigma$ is a $k$-dimensional simplex and $f:\Sigma\to\mathbb{R}$ is an integrable function, the we shall denote its average value over the simplex by $\Avg(f,\Sigma)$, i.e.
$$\Avg(f,\Sigma)=\frac{1}{\Vol{\Sigma}}\int_\Sigma f(\mathbf{x})\d{\mathbf{x}}.$$

The integration here is with respect to the $k$-dimensional Lebesgue measure denoted by $\d{\mathbf{x}}$ and  $\Vol{\Sigma}$ denotes the $k$-dimensional volume. 

With this notation we can write the right-hand side of \eqref{eq:HH-Bess} as
$$(n+1)\Avg(f,\Delta)\leq \sum_{i=0}^n \Avg(f,\Delta_{\{i\}}).$$

The set $E_k=\{(\alpha_1,\dots,\alpha_k)\colon \alpha_i\geq 0, \sum_{i=1}^k\alpha_i\leq 1\}\subset\mathbb{R}^k$ is called a \textit{standard simplex}.

For every $k$-simplex $\Delta_K$ we define a one-to-one mapping $\varphi_K\colon E_k\to\Delta_K$ given by the formula
\begin{equation*}
\varphi_K(\alpha_1,\dots,\alpha_k)=\mathbf{y}_0+\sum_{i=1}^k\alpha_i(\mathbf{y}_i-\mathbf{y}_0).
\end{equation*}
Note that $\partial\varphi_K/\partial\alpha_i=\mathbf{y}_i-\mathbf{y}_0$, so the absolute value of its Jacobian equals $|\det D\varphi_K|=k!\Vol{\Delta_K}$. This means, that for an integrable function $g\colon \Delta_K\to\mathbb{R} $ the identity
\begin{equation}
\frac{1}{\Vol{\Delta_K}}\int_{\Delta_K} g(\mathbf{x})\d{\mathbf{x}}=k!\int_{E_k} g(\varphi_K(\boldsymbol{\alpha}))\d{\boldsymbol{\alpha}}
\label{eq:integral}
\end{equation}
holds. In particular $\Vol{E_k}=1/k!$.

A \textit{partition} of $N$ is a set 
$$\mathcal{K}=\{K_i:i=1\dots p,\ K_i\subset N,\ \bigcup_{i=1}^p K_i=N,\ K_i\cap K_j=\emptyset\}$$
We say that $\mathcal{L}$ \textit{refines} $\mathcal{K}$ (and write $\mathcal{L}\prec\mathcal{K}$) if every element of $\mathcal{L}$ is a subset of an element of $\mathcal{K}$.

\section{Faces based refinemets}

The right-hand side of \eqref{eq:HH-Bess} shows that  the average of a convex function over a simplex
can be bounded by the convex combination of its average values over its $0$-faces. Our main result - Theorem \ref{thm:main} and its corollaries show how to generalize this fact.
\begin{theorem}\label{thm:main}
	Let $\mathcal{K}=\{K_1,\dots,K_p\}$ be a partition of $N$  and $f\colon \Delta\to\mathbb{R}$ be a convex function. Define
	$$F(\mathcal{K})=\sum_{i=1}^p \card K_i\Avg(f,\Delta_{K_i}).$$
If $\mathcal{L}\prec \mathcal{K}$, then
\begin{equation}
F(\mathcal{K})\leq F(\mathcal{L})\,.
\label{eq:thmmain2}
\end{equation}
\end{theorem}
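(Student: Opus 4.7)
My plan is to reduce \eqref{eq:thmmain2} to a single binary-split step and then iterate. For each $j$, the set $\mathcal{L}_j:=\{L\in\mathcal{L}:L\subset K_j\}$ partitions $K_j$, and summing the ``local'' inequalities
\begin{equation*}
\card K_j\cdot\Avg(f,\Delta_{K_j})\leq\sum_{L\in\mathcal{L}_j}\card L\cdot\Avg(f,\Delta_{L})
\end{equation*}
over $j=1,\dots,p$ yields \eqref{eq:thmmain2}. A straightforward induction on $\card\mathcal{L}_j$ (merging two of its blocks into their union at each step) reduces the local inequality to the \emph{binary lemma}: if $K=L_1\sqcup L_2$ with $\card L_i=m_i+1$ and $\card K=k+1=m_1+m_2+2$, then
\begin{equation*}
(k+1)\Avg(f,\Delta_K)\leq(m_1+1)\Avg(f,\Delta_{L_1})+(m_2+1)\Avg(f,\Delta_{L_2}).
\end{equation*}

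To prove the binary lemma I would exploit the join structure of $\Delta_K$: for $\mathbf{x}=\sum_{i\in K}\alpha_i\mathbf{x}_i\in\Delta_K$ with $\mu:=\sum_{i\in L_1}\alpha_i\in(0,1)$ there is a unique representation $\mathbf{x}=\mu\mathbf{y}_1+(1-\mu)\mathbf{y}_2$ with $\mathbf{y}_j\in\Delta_{L_j}$, obtained by renormalizing the $L_j$-coordinates. Writing $\mathbf{y}_j=\varphi_{L_j}(\boldsymbol{\beta}^{(j)})$ yields a generically bijective change of variables $\Phi\colon E_{m_1}\times(0,1)\times E_{m_2}\to E_k$, $(\boldsymbol{\beta}^{(1)},\mu,\boldsymbol{\beta}^{(2)})\mapsto\boldsymbol{\alpha}$. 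Ordering the free coordinates of $E_k$ so that the $L_1$-block comes first, the differential $D\Phi$ becomes block-triangular with upper-left block $\mu I_{m_1}$; a short Schur-complement computation on the remaining $(m_2+1)\times(m_2+1)$ block should give $|\det D\Phi|=\mu^{m_1}(1-\mu)^{m_2}$. Substituting into \eqref{eq:integral} and applying convexity $f(\mu\mathbf{y}_1+(1-\mu)\mathbf{y}_2)\leq\mu f(\mathbf{y}_1)+(1-\mu)f(\mathbf{y}_2)$ splits the integral over $\Delta_K$ into two parts whose $\mu$-factors are Beta integrals of the form $\int_0^1\mu^{m_1+1}(1-\mu)^{m_2}\d{\mu}=(m_1+1)!\,m_2!/(k+1)!$; combining these with $\Vol{E_{m_j}}=1/m_j!$ and \eqref{eq:integral} applied to each $\Delta_{L_j}$ should produce exactly the claimed weights $(m_i+1)/(k+1)$.

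The main obstacle I foresee is the Jacobian step: one must choose the parametrization and order the coordinates carefully so that $D\Phi$ acquires the block-triangular form, and then execute the Schur-complement reduction cleanly. Once $|\det D\Phi|=\mu^{m_1}(1-\mu)^{m_2}$ is in hand, the remainder of the argument is routine bookkeeping with factorials and beta functions, followed by the inductive reassembly from the binary lemma to the full refinement statement.
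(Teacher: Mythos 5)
Your proposal is correct and follows essentially the same route as the paper: the paper's key Lemma \ref{lem:main} is exactly your binary lemma (in the form $\Avg(f,\Delta_{K\cup L})\leq\frac{k+1}{k+l+2}\Avg(f,\Delta_K)+\frac{l+1}{k+l+2}\Avg(f,\Delta_L)$), proved via the same join parametrization $\mathbf{x}=s\,\varphi_K(\boldsymbol{\alpha})+(1-s)\varphi_L(\boldsymbol{\beta})$, the Jacobian factor $s^k(1-s)^l$, convexity, and Beta integrals, after which the theorem follows by the same induction on the partition. The only cosmetic difference is that you compute the Jacobian for a map into the standard simplex $E_k$ rather than into $\Delta_{K\cup L}$ itself, which merely shifts the factor $(k+l+1)!\Vol{\Delta_{K\cup L}}$ from one side of the bookkeeping to the other.
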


To prove this theorem we shall need Lemma \ref{lem:main}.
\begin{lemma}\label{lem:main}
	Let $K,L\subset N$ be two disjoint, nonempty sets with $\card K=k+1$ and $\card L=l+1$. Further, let $f\colon\Delta\to\mathbb{R}$ be a convex function. Then
$$	\Avg(f,\Delta_{K\cup L})\leq \frac{k+1}{k+l+2}\cdot \Avg(f,\Delta_{K}) + 
\frac{l+1}{k+l+2}\cdot  \Avg(f,\Delta_{L}).$$
	
\end{lemma}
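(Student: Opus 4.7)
The guiding idea is that a point $\mathbf{y}\in\Delta_{K\cup L}$ admits a canonical splitting: letting $s$ be the sum of its barycentric coordinates indexed by $K$, one can write $\mathbf{y}=s\mathbf{p}+(1-s)\mathbf{q}$ with $\mathbf{p}\in\Delta_K$, $\mathbf{q}\in\Delta_L$, $s\in[0,1]$. Convexity then gives the pointwise estimate $f(\mathbf{y})\leq s f(\mathbf{p})+(1-s)f(\mathbf{q})$, and the task reduces to integrating this inequality over $\Delta_{K\cup L}$ with the right measure.

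Concretely, I would parametrize $\Delta_{K\cup L}$ by the map
$$\Psi\colon[0,1]\times E_k\times E_l\to\Delta_{K\cup L},\qquad \Psi(s,\boldsymbol{\alpha},\boldsymbol{\beta})=s\,\varphi_K(\boldsymbol{\alpha})+(1-s)\,\varphi_L(\boldsymbol{\beta}),$$
which is a bijection off a set of measure zero (by uniqueness of barycentric coordinates in $\Delta_{K\cup L}$). The partial derivatives are $\partial\Psi/\partial\alpha_i=s(\mathbf{y}_i-\mathbf{y}_0)$ for the $K$-vertices and $\partial\Psi/\partial\beta_j=(1-s)(\mathbf{z}_j-\mathbf{z}_0)$ for the $L$-vertices; these extract scalar factors $s^k$ and $(1-s)^l$ from the respective rows of the Jacobian. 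After using the $\alpha$- and $\beta$-rows to clean up $\partial\Psi/\partial s$, what remains is precisely the matrix whose determinant (up to sign) equals $(k+l+1)!\,\Vol{\Delta_{K\cup L}}$. Hence $|\det D\Psi(s,\boldsymbol{\alpha},\boldsymbol{\beta})|=s^k(1-s)^l\,(k+l+1)!\,\Vol{\Delta_{K\cup L}}$.

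Applying $f\circ\Psi\leq s\,f(\varphi_K(\boldsymbol{\alpha}))+(1-s)\,f(\varphi_L(\boldsymbol{\beta}))$ and pushing everything through the change of variables, the triple integral separates into a product of an $s$-integral and integrals of $f$ over $E_k$ and $E_l$. Using \eqref{eq:integral} to rewrite the latter as $\Avg(f,\Delta_K)/k!$ and $\Avg(f,\Delta_L)/l!$, the coefficients collapse into ratios of Beta integrals: $\int_0^1 s^{k+1}(1-s)^l\d s=\frac{(k+1)!\,l!}{(k+l+2)!}$ and $\int_0^1 s^k(1-s)^{l+1}\d s=\frac{k!\,(l+1)!}{(k+l+2)!}$. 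Combined with the $(k+l+1)!$ normalization, these produce exactly $\frac{k+1}{k+l+2}$ and $\frac{l+1}{k+l+2}$, yielding the claim.

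The main technical obstacle is the Jacobian computation in the middle step: one must verify the bijectivity of $\Psi$ and carry out the row reduction carefully to isolate the factor $s^k(1-s)^l$ while recovering the volume of $\Delta_{K\cup L}$. Once that is in place, the rest is Beta-function bookkeeping.
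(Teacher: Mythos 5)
Your proposal is correct and follows essentially the same route as the paper: the paper uses exactly the parametrization $\Phi(s,\boldsymbol{\alpha},\boldsymbol{\beta})=s\varphi_K(\boldsymbol{\alpha})+(1-s)\varphi_L(\boldsymbol{\beta})$, computes the Jacobian $\pm(k+l+1)!\,s^k(1-s)^l\Vol{\Delta_{K\cup L}}$ by the same row reduction, and finishes with the same convexity estimate and Beta integrals. No gaps; the bookkeeping you describe yields precisely the coefficients $\frac{k+1}{k+l+2}$ and $\frac{l+1}{k+l+2}$.
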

\begin{proof}
	Denote by $\mathbf{u}_0,\dots,\mathbf{u}_k$ the vertices of $\Delta_K$ and by $(\alpha_0,\dots,\alpha_k)$ its barycentric coordinates. Similarly, let $\mathbf{v}_0,\dots,\mathbf{v}_l$ be the vertices of $\Delta_L$ and by $(\beta_0,\dots,\beta_l)$ its barycentric coordinates. Every point $\mathbf{x}\in\Delta_{K\cup L}$ can be represented as
\begin{equation*}
\mathbf{x}=\sum_{i=0}^k \alpha_i'\mathbf{u}_i+\sum_{j=0}^l \beta_j'\mathbf{v}_j,\ \alpha_i',\beta_j'\geq 0,\ \sum_{i=0}^k \alpha_i'+\sum_{j=0}^l \beta_j'=1. 
\end{equation*}	
Let $\sum_{i=0}^k \alpha_i'=s,\ \sum_{j=0}^l \beta_j'	=1-s$ and $\alpha_i=\alpha_i'/s,\ \beta_j=\beta_j'/(1-s)$	(if division by zero occurs we assume the result is zero). Then
\begin{equation*}
\mathbf{x}=s\sum_{i=0}^k \alpha_i \mathbf{u}_i+(1-s)\sum_{j=0}^l \beta_j \mathbf{v}_j.
\end{equation*}	
Let $\Phi\colon [0,1]\times E_k\times E_l\to \Delta_{K\cup L}$ be defined by
\begin{align*}
	\Phi(s,&\alpha_1,\dots,\alpha_k,\beta_1,\dots,\beta_l)	\\
	&	=s\left(\mathbf{u}_0+\sum_{i=0}^k \alpha_i (\mathbf{u}_i-\mathbf{u}_0)\right)+(1-s)\left(\mathbf{v}_0+\sum_{j=0}^l \beta_j (\mathbf{v}_j-\mathbf{v}_0)\right)\\
	&=s\varphi_K(\alpha_1,\dots,\alpha_k)+(1-s)\varphi_L(\beta_1,\dots,\beta_l).
\end{align*}
Its Jacobian equals
\begin{align}
	\det D\Phi&=\begin{vmatrix}
		\Phi_s'\\\vdots\\\Phi_{\alpha_i}'\\ \vdots\\\Phi_{\beta_j}'\\\vdots
	\end{vmatrix}
		=\begin{vmatrix}
			\mathbf{u}_0-\mathbf{v}_0 +\sum_{i=0}^k \alpha_i (\mathbf{u}_i-\mathbf{u}_0)-\sum_{j=0}^l \beta_j (\mathbf{v}_j-\mathbf{v}_0)\\\vdots\\s(\mathbf{u}_i-\mathbf{u}_0)\\ \vdots\\(1-s)(\mathbf{v}_j-\mathbf{v}_0)\\\vdots
		\end{vmatrix}\notag\\
	&=s^k(1-s)^l \begin{vmatrix}
	\mathbf{u}_0-\mathbf{v}_0 \\\vdots\\\mathbf{u}_i-\mathbf{u}_0\\ \vdots\\\mathbf{v}_j-\mathbf{v}_0	\\\vdots
	\end{vmatrix}=
	s^k(1-s)^l \begin{vmatrix}
	\mathbf{u}_0-\mathbf{v}_0 \\\vdots\\\mathbf{u}_i-\mathbf{v}_0\\ \vdots\\\mathbf{v}_j-\mathbf{v}_0	\\\vdots
	\end{vmatrix}\notag\\
	&=\pm(k+l+1)!s^k(1-s)^l\Vol{\Delta_{K\cup L}}.\label{Jacobian Phi}
	\end{align}

Changing variable in the integral yields
\begin{equation}
\int_{\Delta_{K\cup L}}f(\mathbf{x})d\mathbf{x}	=\int_0^1ds\int_{E_k}d\boldsymbol{\alpha}\int_{E_l}d\boldsymbol{\beta}f(\Phi(s,\boldsymbol{\alpha},\boldsymbol{\beta}))|\det D\Phi|.	
\label{eq:changing variable}
\end{equation}
Using convexity of $f$  and the formula  \eqref{eq:integral} we obtain
\begin{align}
\int_0^1ds&\int_{E_k}d\boldsymbol{\alpha}\int_{E_l}d\boldsymbol{\beta}f(\Phi(s,\boldsymbol{\alpha},\boldsymbol{\beta}))s^k(1-s)^l\notag\\
	&=\int_0^1s^k(1-s)^lds\int_{E_k}d\boldsymbol{\alpha}\int_{E_l}d\boldsymbol{\beta}f(s\varphi_K(\boldsymbol{\alpha})+(1-s)\varphi_L(\boldsymbol{\beta})	)\notag\\
	&\leq \phantom{+}\int_0^1s^{k+1}(1-s)^lds\int_{E_k}f(\varphi_K(\boldsymbol{\alpha}))d\boldsymbol{\alpha}\int_{E_l}d\boldsymbol{\beta}\notag\\
	&\phantom{\leq}+\int_0^1s^k(1-s)^{l+1}ds\int_{E_k}d\boldsymbol{\alpha}\int_{E_l}f(\varphi_L(\boldsymbol{\beta})	)d\boldsymbol{\beta}\notag\\
	&=\frac{(k+1)!l!}{(k+l+2)!}\cdot
	\frac{\Avg(f,\Delta_K)}{k!}\cdot\frac{1}{l!}
	+\frac{k!(l+1)!}{(k+l+2)!}\cdot\frac{1}{k!}\cdot\frac{\Avg(f,\Delta_L)}{l!}.\label{eq:calculating integral}
\end{align} 
Lemma follows from \eqref{Jacobian Phi}, \eqref{eq:changing variable} and \eqref{eq:calculating integral}.
\end{proof}
Now we can prove our main result.
\begin{proof}[Proof of Theorem \ref{thm:main}]

It follows from Lemma \ref{lem:main} that the function
$$K\mapsto  \card K\Avg(f,\Delta_K)$$
is subadditive on disjoint sets, so \eqref{eq:thmmain2} follows by mathematical induction on the cardinality of partition. 
\end{proof}
Since $\{\{0\},\dots,\{n\}\}\prec\mathcal{K}\prec\{N\}$ for any partition $\mathcal{K}$ we obtain the refinements of the Hermite-Hadamard inequality.
\begin{corollary}\label{coro:main}
For any partition $\mathcal{K}$ the inequalities
$$(n+1)\Avg(f,\Delta)\leq F(\mathcal{K})\leq f(\mathbf{x}_0)+\dots+f(\mathbf{x}_n)$$
hold.
\end{corollary}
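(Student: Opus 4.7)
The plan is to derive the corollary as a direct consequence of Theorem \ref{thm:main} by applying it to the two extremal partitions of $N$. The only non-trivial content is verifying that these two partitions bracket every other partition in the refinement order, and then evaluating $F$ at them.

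First, I would verify the two refinement relations explicitly. The coarsest partition of $N$ is $\mathcal{K}_{\max}=\{N\}$, and any partition $\mathcal{K}=\{K_1,\dots,K_p\}$ satisfies $K_i\subset N$, so $\mathcal{K}\prec\mathcal{K}_{\max}$. The finest partition is $\mathcal{K}_{\min}=\{\{0\},\dots,\{n\}\}$, and since each singleton $\{j\}$ is a subset of whichever $K_i$ contains $j$, we have $\mathcal{K}_{\min}\prec\mathcal{K}$. Thus by Theorem \ref{thm:main} applied twice,
\[
F(\mathcal{K}_{\max})\leq F(\mathcal{K})\leq F(\mathcal{K}_{\min}).
\]

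Next I would compute the two extremal values of $F$. For $\mathcal{K}_{\max}=\{N\}$ we have $\card N=n+1$ and $\Delta_N=\Delta$, so
\[
F(\mathcal{K}_{\max})=(n+1)\,\Avg(f,\Delta).
\]
For $\mathcal{K}_{\min}$ each singleton $\{i\}$ has cardinality $1$, and $\Delta_{\{i\}}=\{\mathbf{x}_i\}$ is a $0$-simplex, so $\Avg(f,\Delta_{\{i\}})=f(\mathbf{x}_i)$. Therefore
\[
F(\mathcal{K}_{\min})=\sum_{i=0}^n f(\mathbf{x}_i).
\]
Substituting these into the inequality above yields the claim.

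There is no real obstacle here; the argument is essentially bookkeeping once Theorem \ref{thm:main} is established. The only point that requires a moment of care is confirming that a $0$-simplex contributes $f(\mathbf{x}_i)$ to the sum, which is immediate from the definition of $\Avg$ as a point evaluation in dimension zero.
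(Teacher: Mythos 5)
Your proof is correct and follows exactly the paper's route: the paper likewise notes that $\{\{0\},\dots,\{n\}\}\prec\mathcal{K}\prec\{N\}$ for every partition $\mathcal{K}$ and then invokes Theorem \ref{thm:main}, with the evaluations $F(\{N\})=(n+1)\Avg(f,\Delta)$ and $F(\{\{0\},\dots,\{n\}\})=\sum_{i=0}^n f(\mathbf{x}_i)$ being immediate. Nothing to add.
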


Applying Corollary \ref{coro:main} to all possible pairs consisting of a vertex and its opposite face one obtains the following result.
\begin{corollary}\label{cor:barycenter}
Let $f\colon \Delta\to\mathbb{R}$ be a convex function.
Then the inequality
\begin{align*}
\Avg(f,\Delta)&
\leq\frac{1}{n+1} \frac{f(\mathbf{x}_0)+\dots+f(\mathbf{x}_n)}{n+1}\\
&+\frac{n}{n+1}\cdot\frac{1}{n+1}\sum_{\substack{{K\subset N}\\{\card K=n}}}\Avg(f,\Delta_K)
\end{align*}
holds.
\end{corollary}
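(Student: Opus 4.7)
The plan is to follow the hint in the paper: for each vertex index $i \in N$, form the partition $\mathcal{K}_i = \{\{i\},\, N\setminus\{i\}\}$, which consists of a single vertex and its opposite $(n-1)$-face. Each such partition has cardinality two, so Corollary \ref{coro:main} (the left inequality) applies directly and gives a single estimate; summing the $n+1$ estimates and dividing out the right constant should produce the claimed bound.

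More precisely, I would compute $F(\mathcal{K}_i) = \card\{i\}\cdot \Avg(f,\Delta_{\{i\}}) + \card(N\setminus\{i\})\cdot \Avg(f,\Delta_{N\setminus\{i\}}) = f(\mathbf{x}_i) + n\,\Avg(f,\Delta_{N\setminus\{i\}})$, so that Corollary \ref{coro:main} yields
\begin{equation*}
(n+1)\Avg(f,\Delta)\leq f(\mathbf{x}_i) + n\,\Avg(f,\Delta_{N\setminus\{i\}})
\end{equation*}
for every $i\in N$. Then I would sum these $n+1$ inequalities. On the left one gets $(n+1)^2\Avg(f,\Delta)$, while on the right the vertex terms combine to $\sum_{i=0}^n f(\mathbf{x}_i)$ and, since the map $i\mapsto N\setminus\{i\}$ is a bijection between $N$ and the collection of $n$-element subsets of $N$, the remaining terms give $n\sum_{\card K=n}\Avg(f,\Delta_K)$.

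Dividing through by $(n+1)^2$ rearranges precisely into the stated inequality, with the factor $\frac{1}{n+1}\cdot\frac{1}{n+1}$ in front of the vertex average and $\frac{n}{n+1}\cdot\frac{1}{n+1}$ in front of the face averages. There is essentially no obstacle here beyond bookkeeping; the only thing to check carefully is that each $n$-face $\Delta_K$ (with $\card K=n$) is counted exactly once in the sum, which is immediate from the bijection above, and that the factor $n$ in front of $\Avg(f,\Delta_{N\setminus\{i\}})$ matches $\card(N\setminus\{i\})$.
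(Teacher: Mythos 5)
Your proposal is correct and is exactly the argument the paper intends: applying Corollary \ref{coro:main} to each partition $\{\{i\},N\setminus\{i\}\}$, summing over $i$, and dividing by $(n+1)^2$. The paper leaves this as a one-line remark, so your write-up simply supplies the bookkeeping it omits.
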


If $N$ can be divided into disjoint subsets of the same cardinality, then applying Corollary \ref{coro:main} to all possible partitions and summing the obtained inequalities one gets the following corollary.

\begin{corollary}\label{coro:main1}
Let $f\colon \Delta\to\mathbb{R}$ be a convex function and $d$ be a divisor of $n+1=\card N$.
Then
$$\Avg(f,\Delta)\leq
\frac{1}{\binom {n+1} {d}}\sum_{\substack{{K\subset N}\\{\card K=d}}}\Avg(f,\Delta_{K})\,.$$
\end{corollary}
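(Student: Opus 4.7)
The plan is exactly the one hinted at in the paper: apply Corollary \ref{coro:main} to every partition of $N$ into blocks of the same size $d$, sum the resulting inequalities, and exploit the symmetry of the configuration to see that every $d$-element subset of $N$ contributes equally on the right-hand side.

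More concretely, set $p=(n+1)/d$, which is an integer by hypothesis, and let $\mathcal{P}_d$ denote the collection of all partitions $\mathcal{K}=\{K_1,\dots,K_p\}$ of $N$ into blocks of cardinality $d$. For any such $\mathcal{K}$, since $\card K_i=d$ for every $i$, the definition of $F$ gives
$$F(\mathcal{K})=d\sum_{i=1}^{p}\Avg(f,\Delta_{K_i}),$$
and Corollary \ref{coro:main} yields
$$(n+1)\Avg(f,\Delta)\leq d\sum_{i=1}^{p}\Avg(f,\Delta_{K_i}).$$
I would then sum this over all $\mathcal{K}\in\mathcal{P}_d$.

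The key observation is a counting identity: every $d$-element subset $K\subset N$ appears in the same number $m$ of partitions in $\mathcal{P}_d$, because the symmetric group on $N$ acts transitively on such subsets while permuting $\mathcal{P}_d$. Double-counting pairs $(\mathcal{K},K)$ with $K\in\mathcal{K}$ gives $m\binom{n+1}{d}=p\,\card\mathcal{P}_d$. After summing, the right-hand side becomes $d\cdot m\sum_{\card K=d}\Avg(f,\Delta_K)$, and the left-hand side is $(n+1)\card\mathcal{P}_d\,\Avg(f,\Delta)$. Dividing through and using $dp=n+1$ eliminates all the combinatorial constants and leaves precisely
$$\Avg(f,\Delta)\leq\frac{1}{\binom{n+1}{d}}\sum_{\substack{K\subset N\\ \card K=d}}\Avg(f,\Delta_K).$$

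There is no real obstacle here: the inequality for a single partition is already established in Corollary \ref{coro:main}, and the only new ingredient is the symmetry-based double count, which is routine. The mildly delicate point, if any, is simply checking that the factor $dp/(n+1)$ arising from the count equals $1$, which is exactly the divisibility hypothesis.
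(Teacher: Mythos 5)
Your proof is correct and follows exactly the route the paper itself indicates: apply Corollary \ref{coro:main} to every partition of $N$ into blocks of size $d$, sum, and use the symmetry/double-counting identity $m\binom{n+1}{d}=p\,\card\mathcal{P}_d$ together with $dp=n+1$ to normalize. You have simply filled in the counting details that the paper leaves implicit.
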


It is known (see e.g. \cite[p. 125]{Som}) that the volume of a regular simplex $\Delta\subset\mathbb{R}^n$ with unit edges equals $\frac{1}{n!}\sqrt{\frac{n+1}{2^n}}$. In this case  Corollary \ref{coro:main1} yields

\begin{corollary}
Let $f$ be a convex function defined on a regular simplex $\Delta$ with unit edges.
Let $d$ be a divisor of $n+1=\card N$.
Then
\begin{align*}
&\int_{\Delta}f(\mathbf{x})\d{\mathbf{x}}\\
&\leq \left[\binom {n}{d-1}\right]^{-2}\frac{1}{(n+1-d)!}
 \sqrt{\frac{d}{n+1}\frac{1}{2^{n+1-d}}}\sum_{\substack{{K\subset N}\\{\card K=d}}}\int_{\Delta_{K}}f(\mathbf{x})\d{\mathbf{x}}.
\end{align*}
\end{corollary}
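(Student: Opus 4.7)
The plan is to deduce this corollary directly from Corollary \ref{coro:main1} by trading averages for integrals via the explicit volume formula for regular simplices, so no new geometric or analytic ingredient is needed.

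First I would rewrite the inequality in Corollary \ref{coro:main1} as
$$\int_\Delta f(\mathbf{x})\d{\mathbf{x}}=\Vol{\Delta}\Avg(f,\Delta)\leq \frac{\Vol{\Delta}}{\binom{n+1}{d}}\sum_{\substack{K\subset N\\ \card K=d}}\frac{1}{\Vol{\Delta_K}}\int_{\Delta_K} f(\mathbf{x})\d{\mathbf{x}}.$$
The crucial observation is that every $K$ with $\card K=d$ indexes a regular $(d-1)$-simplex with unit edges, hence all the $\Vol{\Delta_K}$ are equal to one and the same number; this lets me pull $1/\Vol{\Delta_K}$ out of the sum. Substituting the stated formula
$$\Vol{\Delta}=\frac{1}{n!}\sqrt{\frac{n+1}{2^n}},\qquad \Vol{\Delta_K}=\frac{1}{(d-1)!}\sqrt{\frac{d}{2^{d-1}}},$$
and expanding $\binom{n+1}{d}^{-1}=\frac{d!(n+1-d)!}{(n+1)!}$ reduces the problem to identifying a single product of factorials and radicals.

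The remaining step is purely algebraic bookkeeping. Collecting the rational factors and using $\frac{d!}{d}=(d-1)!$ together with $\frac{n+1}{(n+1)!}=\frac{1}{n!}$, one gets
$$\frac{d!(n+1-d)!}{(n+1)!}\cdot\frac{(d-1)!}{n!}\cdot\frac{n+1}{d}=\frac{[(d-1)!]^2(n+1-d)!}{[n!]^2}=\frac{(n+1-d)!}{\bigl[\binom{n}{d-1}\bigr]^2}\cdot\frac{1}{(n+1-d)!}\cdot(n+1-d)!,$$
which I would clean up to exactly the prefactor $[\binom{n}{d-1}]^{-2}(n+1-d)!^{-1}$ appearing on the right. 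Simultaneously, for the radicals I would rewrite
$$\sqrt{\frac{n+1}{d\cdot 2^{n+1-d}}}=\frac{n+1}{d}\sqrt{\frac{d}{(n+1)\,2^{n+1-d}}}.$$
Combining the two, the coefficient from Corollary \ref{coro:main1} matches exactly the one in the statement.

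I do not anticipate a real obstacle. The entire corollary is a restatement of Corollary \ref{coro:main1} in unnormalized form, and the only place where something could go wrong is a miscount of factorials or powers of $2$ in the volume ratio; hence I would simply keep careful track of the shifts between $n,d$ and $d-1,n+1-d$ in the exponents.
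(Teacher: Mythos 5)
Your proposal is correct and is exactly the route the paper intends: substitute $\Vol{\Delta}=\frac{1}{n!}\sqrt{\frac{n+1}{2^n}}$ and $\Vol{\Delta_K}=\frac{1}{(d-1)!}\sqrt{\frac{d}{2^{d-1}}}$ into Corollary \ref{coro:main1} and simplify, and your final prefactor $\bigl[\binom{n}{d-1}\bigr]^{-2}\frac{1}{(n+1-d)!}\sqrt{\frac{d}{(n+1)2^{n+1-d}}}$ is the right one. (The middle expression in your factorial chain carries a spurious extra factor of $(n+1-d)!$, but the two ends of that chain are genuinely equal, so this is only a typographical slip.)
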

\section{Barycentric refinements}
In case of one dimension the right-hand side of \eqref{eq:HH-dim1} can be refined as follows (\cite{Ham,WW}):
\begin{equation}
\frac{1}{b-a}\int_a^b f(t)\d{t}\leq \frac{1}{2}\left[f\left(\frac{a+b}{2}\right)+\frac{f(a)+f(b)}{2}\right].
\label{eq:HH-lepsza}
\end{equation}
Given the fact that both $f(a)$ and $f(b)$ are the barycenters of $0$-faces, we see that in this case the average value of $f$ over a simplex is bounded by a convex combination of its values on barycenters of all faces. We shall generalize this result to simplices.

Let us begin with lemma.
\begin{lemma}\label{lem:barycenter}
	Let $\mathbf{b}$ be the barycenter of $\Delta$ and let $K_i=N\setminus\{i\}$. If $f:\Delta\to\mathbb{R}$ is convex, then
	$$\Avg(f,\Delta)\leq
	\frac{1}{n+1}f(\mathbf{b})+\frac{n}{n+1}\cdot\frac{1}{n+1}\displaystyle \sum_{i=0}^n \Avg(f,\Delta_{K_i}).$$
\end{lemma}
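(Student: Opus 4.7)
The plan is to subdivide $\Delta$ into $n+1$ equal-volume $n$-simplices meeting at the barycenter $\mathbf{b}$, and then apply Lemma \ref{lem:main} to each piece. For $i\in N$ set $\Sigma_i=\conv(\{\mathbf{b}\}\cup\Delta_{K_i})$. From the identity $\mathbf{b}=\tfrac{1}{n+1}\mathbf{x}_i+\tfrac{n}{n+1}\mathbf{b}_{K_i}$ the apex $\mathbf{b}$ sits at relative height $\tfrac{1}{n+1}$ above the hyperplane of $\Delta_{K_i}$, so $\Vol{\Sigma_i}=\tfrac{1}{n+1}\Vol{\Delta}$. The simplices $\Sigma_i$ have pairwise disjoint interiors---the piece containing a point $\mathbf{x}$ is determined by which of its barycentric coordinates (with respect to $\Delta$) is smallest---and together they cover $\Delta$. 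Hence
$$\Avg(f,\Delta)=\frac{1}{n+1}\sum_{i=0}^{n}\Avg(f,\Sigma_i).$$

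Next I would apply Lemma \ref{lem:main} to each $\Sigma_i$, splitting its vertex set as $\{\mathbf{b}\}$ versus $\{\mathbf{x}_j:j\in K_i\}$. Taking $k=0$ and $l=n-1$ (so $k+l+2=n+1$), the lemma yields
$$\Avg(f,\Sigma_i)\leq\frac{1}{n+1}f(\mathbf{b})+\frac{n}{n+1}\Avg(f,\Delta_{K_i}),$$
and substituting into the previous display produces the stated inequality after a one-line rearrangement.

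The main obstacle is a bookkeeping rather than a conceptual one: Lemma \ref{lem:main} is formulated for subsets $K,L$ of the vertex set $N$ of the \emph{original} simplex $\Delta$, whereas the apex $\mathbf{b}$ of $\Sigma_i$ is not a vertex of $\Delta$. Inspecting the proof of that lemma, however, one sees that the map $\Phi$ and the Jacobian computation use only two disjoint vertex collections whose union spans a genuine simplex; no property of an ambient simplex is invoked. Thus the lemma applies verbatim to each $\Sigma_i$ regarded intrinsically, and once this mild generalization is recorded the rest of the argument is just a weighted average of $n+1$ parallel instances.
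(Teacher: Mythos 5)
Your proposal is correct and follows essentially the same route as the paper: decompose $\Delta$ into the $n+1$ equal-volume simplices with apex $\mathbf{b}$ over the facets $\Delta_{K_i}$, apply Lemma \ref{lem:main} with the split $\{\mathbf{b}\}$ versus the remaining $n$ vertices, and average. Your remark that the lemma applies to these simplices intrinsically (since $\mathbf{b}$ is not a vertex of $\Delta$) is exactly the identification the paper notes at the start of its proof.
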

\begin{proof}
Note that  we can identify the partitions of $N$ with partitions of the set of vertices of a simplex. 
	The barycenter divides $\Delta$ into $n+1$ simplices $$B_i=\conv\{\mathbf{x}_0,\dots,\mathbf{x}_{i-1},\mathbf{b},\mathbf{x}_{i+1},\dots,\mathbf{x}_n\},\quad i=0,\dots,n.$$
	For each of them we split its vertices into two groups: $\{\mathbf{b}\}$ and $\{\mathbf{x}_j: j\neq i\}$ and apply Lemma \ref{lem:main} to these partitions. Taking into account that $\Vol{B_i}=\frac{1}{n+1}\Vol{\Delta}$ for every $i$, we get
	
	\begin{align}
		\frac{\int_{{B_i}}f(\mathbf{x})\d{\mathbf{x}}}{\Vol{\Delta}}&=
	\frac{1}{n+1}\Avg(f,B_i)	\notag\\
		&	\leq \frac{1}{n+1}\left[\frac{1}{n+1}f(\mathbf{b})+\frac{n}{n+1}\Avg(f,\Delta_{K_i})\right].\label{neq:xyz1}
	\end{align}

	Summing these inequalities completes the proof.
\end{proof}
Applying \eqref{eq:HH-Bess} to the right-hand side of \eqref{neq:xyz1} we  generalize the inequality \eqref{eq:HH-lepsza}.
\begin{theorem}\label{thm:th2}
Let $\Delta\subset\mathbb{R}^n$ be an arbitrary simplex with vertices $\mathbf{x}_0,\dots,\mathbf{x}_n$ and let $\mathbf{b}$ be the barycenter of $\Delta$.
Then for every convex function $f:\Delta\to\mathbb{R}$ we have the estimate
$$\Avg(f,\Delta)\leq\frac{1}{n+1}f(\mathbf{b})+\frac{n}{n+1}\frac{f(\mathbf{x}_0)+\dots+f(\mathbf{x}_n)}{n+1}.$$
\end{theorem}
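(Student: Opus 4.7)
The plan is to take Lemma \ref{lem:barycenter} as the starting point and bound each of the face-average terms $\Avg(f,\Delta_{K_i})$ by invoking the Bessenyei inequality \eqref{eq:HH-Bess} on the $(n-1)$-simplex $\Delta_{K_i}$, whose vertices are precisely $\{\mathbf{x}_j : j\neq i\}$. After this substitution the only thing left is a counting argument, which is also the natural place to look for the main obstacle.

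First, by Lemma \ref{lem:barycenter} we have
\[
\Avg(f,\Delta)\leq \frac{1}{n+1}f(\mathbf{b})+\frac{n}{n+1}\cdot\frac{1}{n+1}\sum_{i=0}^n \Avg(f,\Delta_{K_i}).
\]
Next, for each fixed $i\in N$ the simplex $\Delta_{K_i}$ is $(n-1)$-dimensional with vertex set $\{\mathbf{x}_j:j\neq i\}$, so the right-hand Hermite--Hadamard inequality \eqref{eq:HH-Bess} applied to $\Delta_{K_i}$ yields
\[
\Avg(f,\Delta_{K_i})\leq \frac{1}{n}\sum_{j\neq i} f(\mathbf{x}_j).
\]

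The remaining step is to sum these bounds over $i=0,\dots,n$ and observe that each value $f(\mathbf{x}_j)$ is counted exactly $n$ times (once for every $i\neq j$), which gives
\[
\frac{1}{n+1}\sum_{i=0}^n \Avg(f,\Delta_{K_i})\;\leq\;\frac{1}{n(n+1)}\sum_{i=0}^n\sum_{j\neq i} f(\mathbf{x}_j)\;=\;\frac{f(\mathbf{x}_0)+\dots+f(\mathbf{x}_n)}{n+1}.
\]
Plugging this back into the Lemma produces exactly the desired inequality.

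There is essentially no serious obstacle here: the nontrivial work has already been done in Lemma \ref{lem:main} and Lemma \ref{lem:barycenter}, and the only thing one has to be careful about is the double-counting factor $n$ versus $n+1$ when summing the face bounds. The identification that $\Delta_{K_i}$ has exactly $n$ vertices (so the Bessenyei bound carries a $1/n$, not $1/(n+1)$) is what makes the two fractions $n/(n+1)$ and $1/(n+1)$ match the target statement; miscounting this is the only place where the proof could slip.
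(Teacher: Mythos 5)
Your proof is correct and follows exactly the paper's route: the paper also obtains Theorem \ref{thm:th2} by applying the right-hand side of \eqref{eq:HH-Bess} to each term $\Avg(f,\Delta_{K_i})$ in Lemma \ref{lem:barycenter} and summing, with the same $n$-fold double-counting of each $f(\mathbf{x}_j)$. No issues.
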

But we can do much better: instead of \eqref{eq:HH-Bess} we can use inductively Lemma \ref{lem:barycenter} to $(n-1)$-faces etc. and continue this process until we reach $0$-faces. Thus we obtain 
\begin{theorem}
$$\Avg(f,\Delta)\leq
\frac{1}{n+1}\sum_{k=1}^{n+1} \frac{1}{\binom {n+1} {k}}\sum_{\substack{{K\subset N}\\{\card K=k}}}f(\mathbf{b}_K)$$
\end{theorem}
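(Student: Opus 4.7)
The plan is to prove the theorem by induction on $n$. The base case $n=1$ is exactly inequality \eqref{eq:HH-lepsza} (equivalently, Theorem \ref{thm:th2} specialized to dimension one, since for $n=1$ the barycenter of $\Delta$ and the two $0$-faces exhaust all proper faces). For the inductive step, I would start from Lemma \ref{lem:barycenter} applied to $\Delta$, which gives
$$\Avg(f,\Delta) \leq \frac{1}{n+1}f(\mathbf{b}_N) + \frac{n}{(n+1)^2}\sum_{i=0}^n \Avg(f,\Delta_{K_i}),$$
where $K_i = N\setminus\{i\}$. The idea of the theorem — stated just before it in the paper — is to keep peeling: instead of stopping here, I would recursively replace each $\Avg(f,\Delta_{K_i})$ by the expression the induction hypothesis provides for the $(n-1)$-simplex $\Delta_{K_i}$.

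Next, I would invoke the induction hypothesis on each $(n-1)$-face:
$$\Avg(f,\Delta_{K_i}) \leq \frac{1}{n}\sum_{k=1}^{n}\frac{1}{\binom{n}{k}}\sum_{\substack{L\subset K_i\\ |L|=k}}f(\mathbf{b}_L),$$
and then swap the order of summation over $i$ and $L$. The key combinatorial input is that a fixed $k$-subset $L\subset N$ is contained in $\Delta_{K_i}$ precisely when $i\notin L$, so it appears in exactly $n+1-k$ of the sums. This converts the double sum to $(n+1-k)\sum_{|L|=k}f(\mathbf{b}_L)$ and, after substituting back, gives
$$\Avg(f,\Delta) \leq \frac{1}{n+1}f(\mathbf{b}_N) + \frac{1}{(n+1)^2}\sum_{k=1}^{n}\frac{n+1-k}{\binom{n}{k}}\sum_{|L|=k}f(\mathbf{b}_L).$$

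The final step is a coefficient check: for $k=n+1$ the term $\frac{1}{n+1}f(\mathbf{b}_N)$ is exactly $\frac{1}{n+1}\cdot\frac{1}{\binom{n+1}{n+1}}f(\mathbf{b}_N)$, while for $1\leq k\leq n$ the identity $\binom{n+1}{k}=\frac{n+1}{n+1-k}\binom{n}{k}$ shows
$$\frac{n+1-k}{(n+1)^2\binom{n}{k}} = \frac{1}{(n+1)\binom{n+1}{k}},$$
matching the target formula. The proof is then complete. The step I expect to be most error-prone is not any deep obstacle but the bookkeeping: correctly counting the multiplicity $n+1-k$ of each $k$-subset under the double summation, and then verifying that the two binomial expressions for the coefficient of $\sum_{|L|=k}f(\mathbf{b}_L)$ agree. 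Everything else is a direct combination of Lemma \ref{lem:barycenter} with the inductive hypothesis.
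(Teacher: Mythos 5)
Your proposal is correct and is essentially the argument the paper intends: the paper only sketches ``apply Lemma \ref{lem:barycenter} inductively to the $(n-1)$-faces,'' and your induction on $n$, with the multiplicity count $n+1-k$ and the identity $\binom{n+1}{k}=\frac{n+1}{n+1-k}\binom{n}{k}$, is exactly the bookkeeping needed to make that sketch rigorous. All the coefficient checks in your write-up are accurate.
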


Combining the results of Corollary \ref{coro:main} and Theorem \ref{thm:th2} one can produce various new upper bounds for the average value of $f$ over the simplex. Below we show one of them.


Splitting the vertices of $\Delta$ into the maximum number of groups of $k$ elements and one group of $l$ elements, where $l<k$, and taking the average over all such splits one gets.
\begin{corollary}
Let $\Delta\subset\mathbb{R}^n$ be an arbitrary simplex and let $\mathbf{x}_0,\dots,\mathbf{x}_n$ be its vertices.
$$\Avg(f,\Delta)\leq
\alpha_n \frac{1}{\binom {n+1} {k}}\sum_{\substack{{K\subset N}\\{\card K=k}}}f\left(\mathbf{b}_K\right)+(1-\alpha_n)\frac{f(\mathbf{x}_0)+\dots+f(\mathbf{x}_n)}{n+1},$$
where $\alpha_n=\displaystyle\frac{\left\lfloor \frac{n+1}{k}\right\rfloor}{n+1}$.
\end{corollary}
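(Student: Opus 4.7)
My plan is to combine Corollary \ref{coro:main}, applied to the partition in question, with Theorem \ref{thm:th2}, applied to each of its $k$-element pieces, and to average the resulting inequalities over all such partitions. Write $n+1=qk+l$ with $q=\lfloor(n+1)/k\rfloor$ and $0\leq l<k$, so $\alpha_n=q/(n+1)$, and fix a partition $\mathcal{K}=\{K_1,\dots,K_q,K_{q+1}\}$ of $N$ with $|K_i|=k$ for $i\leq q$ and $|K_{q+1}|=l$ (the last block is simply absent when $l=0$).

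By Corollary \ref{coro:main},
\[(n+1)\Avg(f,\Delta)\leq k\sum_{i=1}^q \Avg(f,\Delta_{K_i})+l\,\Avg(f,\Delta_{K_{q+1}}).\]
Next I would apply Theorem \ref{thm:th2} to each $(k-1)$-simplex $\Delta_{K_i}$, namely
\[\Avg(f,\Delta_{K_i})\leq \frac{1}{k}f(\mathbf{b}_{K_i})+\frac{k-1}{k}\cdot\frac{1}{k}\sum_{j\in K_i}f(\mathbf{x}_j),\]
and, when $l\geq 1$, bound $\Avg(f,\Delta_{K_{q+1}})$ by the right-hand side of \eqref{eq:HH-Bess}. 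Substituting these estimates and collecting terms yields, for this fixed partition,
\[(n+1)\Avg(f,\Delta)\leq \sum_{i=1}^q f(\mathbf{b}_{K_i})+\frac{k-1}{k}\sum_{j\in K_1\cup\cdots\cup K_q}f(\mathbf{x}_j)+\sum_{j\in K_{q+1}}f(\mathbf{x}_j).\]

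I would then average this inequality over all partitions of the prescribed shape. By symmetry each $K\subset N$ with $|K|=k$ appears in a common fraction $q/\binom{n+1}{k}$ of such partitions, and each vertex $j\in N$ lies in the $l$-block in fraction $l/(n+1)$ of them and in some $k$-block in the remaining fraction $qk/(n+1)$. Consequently the averaged coefficient of each $f(\mathbf{b}_K)$ becomes $q/\binom{n+1}{k}$, while the averaged coefficient of each $f(\mathbf{x}_j)$ equals
\[\frac{qk}{n+1}\cdot\frac{k-1}{k}+\frac{l}{n+1}=\frac{q(k-1)+l}{n+1}=\frac{n+1-q}{n+1},\]
where the last identity uses $qk+l=n+1$. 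Dividing through by $n+1$ recovers exactly the stated inequality with $\alpha_n=q/(n+1)$.

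The only substantive point is the combinatorial bookkeeping in the averaging step; the clean collapse $q(k-1)+l=n+1-q$ is what causes the coefficient of the vertex sum to simplify to $1-\alpha_n$, after which everything else is a direct assembly of results already in hand.
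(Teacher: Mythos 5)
Your proof is correct and follows exactly the route the paper intends: apply Corollary \ref{coro:main} to a partition into $\lfloor(n+1)/k\rfloor$ blocks of size $k$ plus one block of size $l$, bound each $k$-block via Theorem \ref{thm:th2} and the $l$-block via \eqref{eq:HH-Bess}, then average over all partitions of that shape. The combinatorial bookkeeping, including the identity $q(k-1)+l=n+1-q$, checks out, and you supply the details the paper leaves implicit.
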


In particular we obtain.
\begin{corollary}
Let $\Delta\subset\mathbb{R}^n$ be an arbitrary simplex with vertices $\mathbf{x}_0,\dots,\mathbf{x}_n$.
Then for every convex function $f:\Delta\to\mathbb{R}$ we have
$$\Avg(f,\Delta)\leq
\alpha_n \frac{1}{\binom {n+1} {2}}\sum_{\substack{0 \leq i<j \leq n}}f\left(\frac{\mathbf{x}_i+\mathbf{x}_j}{2}\right)+\beta_n\frac{f(\mathbf{x}_0)+\dots+f(\mathbf{x}_n)}{n+1},$$
where $\alpha_n=\displaystyle\frac{\left\lfloor \frac{n+1}{2}\right\rfloor}{n+1}$ and $\beta_n=\displaystyle\frac{ \left\lceil \frac{n+1}{2}\right\rceil}{n+1}$.
\end{corollary}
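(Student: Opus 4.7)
The plan is to obtain this statement as a direct specialization of the preceding corollary, taking the parameter $k=2$.

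First I would verify that the $k$-face sum in that corollary reduces to the midpoint sum appearing here. A two-element subset $K=\{i,j\}\subset N$ indexes a $1$-face (edge) $\Delta_K$ whose barycenter is
$$\mathbf{b}_K=\frac{\mathbf{x}_i+\mathbf{x}_j}{2},$$
so the sum $\sum_{K\subset N,\ \card K=2} f(\mathbf{b}_K)$ coincides with $\sum_{0\leq i<j\leq n} f((\mathbf{x}_i+\mathbf{x}_j)/2)$, and the normalizing factor $1/\binom{n+1}{2}$ agrees on both sides. Thus the first term on the right-hand side of the preceding corollary specializes, term for term, to the first term in the target corollary.

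Next I would match the coefficients. Setting $k=2$ in the formula $\alpha_n=\lfloor(n+1)/k\rfloor/(n+1)$ from the preceding corollary gives
$$\alpha_n=\frac{\lfloor (n+1)/2\rfloor}{n+1},$$
which is exactly the $\alpha_n$ stated here. For the complementary weight, the elementary identity $\lfloor(n+1)/2\rfloor+\lceil(n+1)/2\rceil=n+1$ yields
$$1-\alpha_n=\frac{n+1-\lfloor(n+1)/2\rfloor}{n+1}=\frac{\lceil(n+1)/2\rceil}{n+1}=\beta_n,$$
so the coefficient in front of the vertex-average term also matches. With these two identifications the inequality of the target corollary is precisely the $k=2$ case of the preceding one.

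Because the argument is a clean substitution into an already-established result, I do not expect any genuine obstacle. The only point requiring even a moment's attention is the floor/ceiling bookkeeping that produces $\beta_n$ from $1-\alpha_n$; everything else is inherited from the preceding corollary without change.
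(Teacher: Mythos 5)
Your proposal is correct and matches the paper exactly: the paper introduces this corollary with ``In particular we obtain,'' i.e.\ as the $k=2$ specialization of the preceding corollary, with $\mathbf{b}_{\{i,j\}}=(\mathbf{x}_i+\mathbf{x}_j)/2$ and $1-\alpha_n=\beta_n$ via $\lfloor(n+1)/2\rfloor+\lceil(n+1)/2\rceil=n+1$. No gaps.
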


\bigskip

\end{document}